\newtheorem{theorem}{Theorem}
\newtheorem{remark}[theorem]{Remark}
\newtheorem{conjecture}[theorem]{Conjecture}
\newtheorem{proposition}[theorem]{Proposition}
\newtheorem*{conjecture*}{Conjecture}
\renewcommand{\epsilon}{\varepsilon}
\renewcommand{\phi}{\varphi}
\renewcommand{\kappa}{\varkappa}
\renewcommand{\theta}{\vartheta}
\begin{document}

\begin{frontmatter}

\title{A counterexample to a conjecture on the chromatic number of $r$-stable Kneser hypergraphs}

\author[label1]{Hamid Reza Daneshpajouh}
\address[label1]{
University of Nottingham Ningbo China, 199 Taikang E Rd, Yinzhou, Ningbo, Zhejiang, China, 315104}

\ead{Hamid-Reza.Daneshpajouh@nottingham.edu.cn}

\begin{abstract}
The main purpose of this note is to give a counterexample to the following conjecture, raised by Florian Frick [\textit{Int. Math. Res. Not. IMRN 2020 (13), 4037-4061 (2020)}].
\begin{conjecture*}
 Let $r\geq 3$ and let $\mathcal{F}$ be a set system. Then
 $$\chi\left(\textrm{KG}^r\left(\mathcal{F}_{r-stab}\right)\right)\geq\left\lceil\frac{cd_r\left(\mathcal{F}\right)}{r-1}\right\rceil.$$
\end{conjecture*}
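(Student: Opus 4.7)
The plan is to disprove the conjecture by exhibiting an explicit set system $\mathcal{F}$, together with integers $r\geq 3$ and $n$, for which the right-hand side strictly exceeds $\chi(\textrm{KG}^r(\mathcal{F}_{r-stab}))$. The guiding intuition is that $cd_r(\mathcal{F})$ depends on the entire family $\mathcal{F}$, whereas the chromatic number on the left depends only on the $r$-stable portion of $\mathcal{F}$. Hence, if the non-$r$-stable members of $\mathcal{F}$ can be chosen to inflate the colorability defect while the $r$-stable portion remains combinatorially trivial, the conjectured inequality must fail.

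Concretely, I would fix a small value of $r$ (most naturally $r=3$) and search for a ground set $[n]$ on which $\mathcal{F}$ can be constructed in two layers. The first layer is a collection of non-$r$-stable sets, arranged so that every partition of $[n]\setminus X$ into $r$ classes, for $|X|$ below some threshold, has at least one class containing a member of this layer; this forces $cd_r(\mathcal{F})$ to be large. The second layer is an $r$-stable subfamily chosen sparsely enough --- possibly empty --- that $\mathcal{F}_{r-stab}$ contains no $r$ pairwise disjoint members, so $\chi(\textrm{KG}^r(\mathcal{F}_{r-stab}))\leq 1$. A direct computation then verifies $1<\lceil cd_r(\mathcal{F})/(r-1)\rceil$, giving the desired counterexample. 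Working with small parameters has the advantage that both $cd_r(\mathcal{F})$ and the chromatic number can be checked by finite case analysis.

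The main obstacle is arranging the first layer so that it genuinely forces a large colorability defect. A naive choice, such as declaring $\mathcal{F}$ to consist of all non-$r$-stable $k$-subsets of $[n]$, collapses at once: the partition of $[n]$ into the $r$ residue classes modulo $r$ is itself $r$-stable in each class, so no class contains any non-$r$-stable set, and $cd_r(\mathcal{F})$ drops to zero. To defeat this, $\mathcal{F}$ must be designed so that every $r$-partition of $[n]$ --- in particular, every residue-class partition --- leaves at least one class harbouring a member of $\mathcal{F}$. This will likely require carefully chosen sets whose elements are spread across several residues, together with a balance condition ensuring that no $r$ pairwise disjoint $r$-stable sets accidentally enter $\mathcal{F}_{r-stab}$. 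Once such an $\mathcal{F}$ is identified, the verification of both inequalities reduces to a finite check, and the counterexample follows.
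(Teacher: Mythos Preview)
Your overall strategy is correct and is precisely the paper's approach: build $\mathcal{F}$ so that $\mathcal{F}_{r\text{-stab}}$ is combinatorially trivial (empty, or containing no $r$ pairwise disjoint members) while the non-$r$-stable part forces $cd_r(\mathcal{F})>0$.

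The gap is that you dismissed the ``naive'' construction too quickly, and that construction is exactly the paper's counterexample. You argued that if $\mathcal{F}$ consists of all non-$r$-stable $2$-subsets of $[n]$, then the residue classes modulo $r$ give a proper $r$-colouring, forcing $cd_r(\mathcal{F})=0$. This reasoning overlooks the \emph{cyclic} half of the $r$-stability condition: a pair $\{i,j\}$ is $r$-stable only if $r\le |i-j|\le n-r$. Your argument is valid only when $r\mid n$. If instead $n=kr+1$, then the residue class $\{1,r+1,\ldots,kr+1\}$ contains the pair $\{1,kr+1\}$, and $|1-(kr+1)|=kr>n-r$, so this pair is \emph{not} $r$-stable and therefore lies in $\mathcal{F}$. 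The residue-class colouring is thus improper. The paper takes $\mathcal{F}=\binom{[n]}{2}\setminus\binom{[n]}{2}_{r\text{-stab}}$ with $n=kr+1$, shows by an easy forcing argument that $cd_r(\mathcal{F})=1$, and observes that $\mathcal{F}_{r\text{-stab}}=\emptyset$ gives $\chi(\textrm{KG}^r(\mathcal{F}_{r\text{-stab}}))=0<1=\lceil 1/(r-1)\rceil$.

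So your two-layer machinery and the search for sets ``spread across several residues'' are unnecessary; the key missing observation is simply to take $n\not\equiv 0\pmod r$. The paper's second counterexample (with $\mathcal{F}_{r\text{-stab}}\neq\emptyset$ but still chromatic number $1$) does follow a pattern closer to your two-layer idea, but even there the base layer is the same naive family.
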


\end{abstract}

\end{frontmatter}

\section{Introduction}
Throughout this note, the symbol $[n]$ is used for the set $\{1,\ldots, n\}$, the set of all $k$-subsets of $[n]$ is denoted by $\binom{[n]}{k}$, and the set of all subsets of a set $X$ is denoted by $2^X$. A hypergraph $\mathcal{H}=\left(V, E\right)$ is a pair $\left(V, E\right)$ where $V$ is a finite set of elements called vertices, and $E$ is a set of non-empty subsets of $V$ called edges. An $m$-coloring of a hypergraph $\mathcal{H}$ is a map $c: V\left(\mathcal{H}\right)\to\{1,\ldots, m\}$ with no monochromatic edge, i.e., $|c\left(e\right)|\geq 2$ for all $e\in\mathcal{H}$. We say a hypergraph is $m$-colorable if it admits an $m$-coloring. The chromatic number of $\chi\left(\mathcal{H}\right)$ of a hypergraph $\mathcal{H}$ is the minimum integer $m$ such that $\mathcal{H}$ is $m$-colorable. For an integer $r\geq 2$, $r$-colorability defect $cd_{r}\left(\mathcal{H}\right)$ of a hypergraph $\mathcal{H}=\left(V, E\right)$ is the minimum number of vertices that they must be removed from the vertex set of $\mathcal{H}$ such that the induced hypergraph on the remaining vertices is $r$-colorable. For a system $\mathcal{F}$ of subsets of a set and non-negative integer $r$, $\textrm{KG}^{r}\left(\mathcal{F}\right)$ is an $r$-uniform hypergraph whose vertices are all elements of $\mathcal{F}$ and edges are all sets $\{A_1, \ldots , A_r\}$ of $r$ vertices where $A_i\cap A_j$ for $i\neq j$. A subset $\sigma \subseteq [n]$ is called \textit{$s$-stable} if $s\leq |i-j|\leq n-s$ for $i, j\in \sigma$ distinct; Similarly $\sigma\subseteq [n]$ is called \textit{almost $s$-stable} if $s\leq |i-j|$ for $i, j\in \sigma$ distinct. For a system $\mathcal{F}$ of the subsets of $[n]$, the set of all $s$-stable (almost $s$-stable) members of $\mathcal{F}$ is denoted by $\mathcal{F}_{s-\text{stab}}$ ($\mathcal{F}_{\widetilde{s-\text{stab}}}$). Note that a set system $\mathcal{F}$ of subsets of $[n]$ can be seen as the hypergraph $\left([n], \mathcal{F}\right)$, and so these two words are used interchangeably through this note.

In 1978, Lov\'{a}sz~\cite{Lo78} proved that
$$\chi\left(\textrm{KG}^2\left(\binom{[n]}{k}\right)\right)=n-2\left(k-1\right),\quad \text{for all}\,\, n\geq 2k$$ 
and shortly afterward, Schrijver~\cite{Sch78} showed that even after deleting all non 2-stable vertices from $\binom{[n]}{k}$ the chromatic number is not changed, i.e., 
$$\chi\left(\textrm{KG}^2\left({{\binom{[n]}{k}}_{2-\text{stab}}}\right)\right)=n-2\left(k-1\right)\quad \text{for all}\,\, n\geq 2k.$$
Later, Alon--Frankl--Lov\'{a}sz~\cite{alon1986chromatic} generalized the Lov\'{a}sz's result as follows

$$
\chi \left({\textrm{KG}}^{r}\left(\binom{[n]}{k}\right)\right)=\left\lceil\frac{n-r(k-1)}{r-1}\right\rceil\quad\text{for all}\,\, n\geq rk\,\,\&\,\, r\geq 2.
$$
Ziegler~\cite{ziegler2002generalized} conjectured that the Alon--Frankl--Lov\'{a}sz result holds if even all the non $r$-stable vertices are removed, i.e.,
\begin{conjecture}\label{Conj: zig}
If $r\geq 2$ and $n\geq rk$, then
$$\chi \left(\textrm{KG}^r\left(\binom{[n]}{k}_{r-\text{stab}}\right)\right)=\left\lceil\frac{n-r(k-1)}{r-1}\right\rceil\quad\text{for all}\,\, n\geq rk.$$
\end{conjecture}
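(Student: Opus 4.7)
The upper bound is immediate from the containment $\binom{[n]}{k}_{r-\text{stab}}\subseteq\binom{[n]}{k}$: the $r$-stable hypergraph is an induced subhypergraph of the full Kneser hypergraph, so any proper coloring of the larger object restricts to a proper coloring of the smaller one, and the Alon--Frankl--Lov\'asz theorem quoted above yields $\chi\bigl(\textrm{KG}^r(\binom{[n]}{k}_{r-\text{stab}})\bigr)\leq\left\lceil\frac{n-r(k-1)}{r-1}\right\rceil$. The substance of the conjecture is the matching lower bound.

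My plan for the lower bound is topological. Writing $t=\left\lceil\frac{n-r(k-1)}{r-1}\right\rceil$ and assuming for contradiction a proper coloring $c$ with $t-1$ colors, I would use $c$ to build an $S_r$-equivariant (equivalently $\mathbb{Z}_r$-equivariant) simplicial map from a configuration complex naturally associated with the hypergraph -- for instance an $r$-fold deleted join of $[n]$, or a box-type complex built from the $r$-stable $k$-sets -- to a sphere or simplex boundary carrying the standard $S_r$-action, of dimension just below what the topology of the source forbids. The contradiction would then come from a generalized Borsuk--Ulam theorem, for example Dold's theorem or Volovikov's index inequality for $\mathbb{Z}_r$-actions.

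The crucial combinatorial input, and in my view the principal obstacle, is an analogue of Schrijver's arc-covering lemma. One needs to show that every $\mathbb{Z}_r$-equivariant labelling of an appropriate deleted join of $[n]$ by (color, sign)-pairs produces an \emph{honest $r$-stable} $k$-subset of $[n]$ realising the label pattern, not merely a $k$-subset. For $r=2$ this reduces to a short pigeonhole on arcs of the cyclic $n$-gon, but for $r\geq 3$ the natural count appears too weak: the $r$-fold deleted join admits equivariant configurations that do not correspond to $r$-stable Schrijver arrangements, and the standard lower bound for the $\mathbb{Z}_r$-index of the join of $\binom{[n]}{k}_{r-\text{stab}}$ seems to fall short of the value required. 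A successful proof will therefore likely require a genuinely new equivariant construction tailored to $r$-stability rather than a direct lift of Schrijver's argument; the counterexample to Frick's broader conjecture presented in this note suggests that any such construction must exploit features specific to $\binom{[n]}{k}$ that fail in the more general set-system setting.
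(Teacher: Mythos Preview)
The statement you have attempted is \emph{Conjecture~\ref{Conj: zig}} (Ziegler's conjecture), which the paper records as an open problem; there is no proof of it in the paper to compare against. Your proposal is not a proof either: you correctly observe that the upper bound is immediate from Alon--Frankl--Lov\'asz, and you sketch the natural topological framework for the lower bound, but you then explicitly acknowledge that the ``crucial combinatorial input'' --- the $r$-stable analogue of Schrijver's arc lemma --- is missing, and that the known equivariant index bounds ``seem to fall short.'' That is an accurate diagnosis of the state of the art, but it means your text is a discussion of why the conjecture is hard, not a proof of it. If the assignment was to reproduce the paper's proof of this item, be aware that no such proof exists; the conjecture remains open in general (it is known for $r$ a power of $2$ by Alon--Drewnowski--{\L}uczak and for a few other special cases).
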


And finally, the following generalization of the Ziegler conjecture is raised in ~\cite{frick2020chromatic}.
\begin{conjecture}
\label{Conj: Florian}
 Let $r\geq 3$ and let $\mathcal{F}$ be a set system on the ground set $[n]$. Then
 $$\chi\left(\textrm{KG}^r\left(\left(\mathcal{F}_{r-stab}\right)\right)\right)\geq\left\lceil\frac{cd_r(\mathcal{F}}{r-1}\right\rceil.$$
\end{conjecture}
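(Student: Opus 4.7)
The plan is to produce an explicit set system $\mathcal{F}$ on some ground set $[n]$ for which the left-hand side of the conjectured inequality vanishes while the right-hand side is at least $1$. The cleanest candidate is $\mathcal{F}=\binom{[n]}{k}$ with the parameters chosen in the regime $r(k-1)<n<rk$; the minimal instance is $k=2$ and $n=r+1$ for any $r\ge 3$.

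First I would verify that $\mathcal{F}_{r\text{-stab}}=\emptyset$ in this regime. By definition, an $r$-stable $k$-subset $\sigma\subseteq[n]$ satisfies the cyclic-distance condition $r\le|i-j|\le n-r$ for every distinct $i,j\in\sigma$; viewing $[n]$ as a cycle, this says that the $k$ points of $\sigma$ have pairwise cyclic distance at least $r$, and a standard gap-sum argument around the cycle forces $n\ge rk$. In the chosen regime this fails, so $\mathcal{F}_{r\text{-stab}}=\emptyset$ and $\chi(\textrm{KG}^r(\mathcal{F}_{r\text{-stab}}))=0$ vacuously.

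Next I would compute $cd_r(\mathcal{F})$. Because every edge of $\mathcal{F}=\binom{[n]}{k}$ is a $k$-subset, an $r$-coloring of the surviving ground set avoids monochromatic edges if and only if each color class has size at most $k-1$; hence the minimum number of deleted vertices equals $n-r(k-1)$, which is strictly positive by the assumption $n>r(k-1)$. Consequently $\lceil cd_r(\mathcal{F})/(r-1)\rceil\ge 1>0=\chi(\textrm{KG}^r(\mathcal{F}_{r\text{-stab}}))$, and Conjecture~\ref{Conj: Florian} fails on this $\mathcal{F}$.

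The only real subtlety is checking that such an example is admissible under the conjecture as stated. Since the statement in the excerpt places no nonemptiness or nondegeneracy hypothesis on $\mathcal{F}$ or on $\mathcal{F}_{r\text{-stab}}$, the construction is a legitimate counterexample. To preempt the objection that the example is trivial because $\mathcal{F}_{r\text{-stab}}$ is empty, I would close with a short remark indicating how to augment $\mathcal{F}$ by additional non-$r$-stable sets that inflate $cd_r(\mathcal{F})$ without introducing new $r$-stable members, producing counterexamples with an arbitrarily large gap between the two sides of the conjectured inequality.
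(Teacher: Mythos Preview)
Your proposal is correct and follows essentially the same strategy as the paper: exhibit a family whose $r$-stable part is empty (so the left-hand side is $0$) while $cd_r$ is positive. Your minimal instance $\mathcal{F}=\binom{[r+1]}{2}$ in fact coincides with the paper's example $\mathcal{F}(n,r)=\binom{[n]}{2}\setminus\binom{[n]}{2}_{r\text{-stab}}$ at $n=r+1$, and your color-class--size computation of $cd_r$ is a clean shortcut compared with the paper's explicit coloring argument.
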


The main aim of this paper to disprove Conjecture~\ref{Conj: Florian}.

\section{Main Result}
Let $\mathcal{F}(n,r)=\binom{[n]}{2}\setminus\binom{[n]}{2}_{r-stab}$ be the set system on the ground set $[n]$.

\begin{proposition}
Let $r\geq 2$, and $n=kr+1$ for some $k\geq 1$. Then, we have 
$$cd_r\left(\mathcal{F}(n,r)\right)= 1.$$
\end{proposition}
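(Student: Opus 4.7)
The plan is to recognize $\mathcal{F}(n,r)$, viewed as the $2$-uniform hypergraph $([n],\mathcal{F}(n,r))$, as a power of a cycle. A pair $\{i,j\}\subseteq[n]$ fails to be $r$-stable precisely when $\min(|i-j|,n-|i-j|)<r$, i.e.\ when $i$ and $j$ are at cyclic distance less than $r$ on the $n$-cycle $C_n$. Hence $\mathcal{F}(n,r)$ is, as a graph, the $(r-1)$-th power $C_n^{\,r-1}$, and an $r$-coloring in the hypergraph sense is simply a proper graph coloring by $r$ colors.

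The first task is the lower bound $cd_r(\mathcal{F}(n,r))\geq 1$, which amounts to showing that $C_n^{\,r-1}$ admits no proper $r$-coloring when $n=kr+1$. Any $r$ consecutive vertices of $C_n$ form an $r$-clique in $C_n^{\,r-1}$, so under a proper $r$-coloring $c$ they must receive all $r$ distinct colors. Consequently $c(i+r)$ is the unique color missing from $\{c(i+1),\ldots,c(i+r-1)\}$, namely $c(i)$, so $c$ is $r$-periodic around the cycle. But on a cycle of length $kr+1$, $r$-periodicity combined with the cycle relation $c(i+n)=c(i)$ yields $c(i+1)=c(i)$ for every $i$, contradicting $c(i)\neq c(i+1)$.

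For the matching upper bound I would exhibit an explicit $r$-coloring of the hypergraph obtained after removing one vertex, say $n$. The natural candidate is $c(i)=((i-1)\bmod r)+1$ on $[kr]$. The edges of the induced hypergraph on $[kr]$ are the pairs $\{i,j\}\subseteq[kr]$ with $|i-j|<r$ or $|i-j|>n-r=(k-1)r+1$; the first range automatically gives distinct residues mod $r$, while the second range collapses to $|i-j|\in\{(k-1)r+2,\ldots,kr-1\}$, whose residues mod $r$ are $2,3,\ldots,r-1$, all nonzero. Hence $c$ is a proper $r$-coloring of the induced hypergraph, giving $cd_r(\mathcal{F}(n,r))\leq 1$.

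I do not anticipate any real obstacle: once the cycle-power identification is made, the lower bound is a one-line periodicity argument and the upper bound is a routine residue calculation. The only place demanding some care is the bookkeeping in the upper-bound step, where one must check that after deleting $n$ the surviving ``long'' differences occupy a single block of length $r-2$ strictly between $(k-1)r$ and $kr$, so that none of them is divisible by $r$.
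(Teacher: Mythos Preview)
Your argument is correct and follows essentially the same route as the paper: the paper also removes the vertex $n$ and colors $[kr]$ by residues mod $r$, and for the lower bound it also shows that a putative $r$-coloring is forced to be $r$-periodic and then derives a contradiction from $n=kr+1$. Your identification of $\mathcal{F}(n,r)$ with the cycle power $C_n^{\,r-1}$ is a tidy conceptual packaging, and you verify the wrap-around edges in the upper bound more carefully than the paper does, but the underlying proof is the same.
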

\begin{proof}
First of all it is easy to see that $cd_r\left(\mathcal{F}(n,r)\right)\leq 1$. Indeed, after removing the vertex $n$, the induced family on the remaining vertices is $r$-colorable; for each $1\leq i\leq r$ assign the color $i$ to the vertices $i, i+r, i+2r, \ldots, i+(k-1)r$. Now, assume the contrary that is $cd_r\left(\mathcal{F}(n,r)\right)=0$. This means that there is a proper coloring of $\mathcal{F}(n,r)$ with $r$ colors, namely $1, \ldots, r$. First note that each pair of distinct vertices among $\{1, \ldots, r\}$ must receive different colors as any two of them present an element of $\mathcal{F}(n,r)$. Without loss of generality, assume that each vertex $i$ receives the color $i$ for $1\leq i\leq r$. Since $\{i, r+1\}\in\mathcal{F}(n,r)$ for every $2\leq i\leq r$, the vertex $r+1$ must be colored by $1$. This implies the vertex $r+2$ must be colored by $2$ as $\{r+1, r+2\}\in\mathcal{F}(n,r)$ and $\{i, r+2\}\in\mathcal{F}(n,r)$ for every $3\leq i\leq r$. Inductively, a similar argument shows that each of the vertex of the form $lr+j$ must receive the color $j$ for each $1\leq j\leq r$ and $0\leq l\leq k-1$. This implies that the vertex $n=kr+1$ must be colored with $1$. But, this is a contradiction as $\{1, n\}\in\mathcal{F}(n,r)$. Therefore,  $cd_r\left(\mathcal{F}(n,r)\right)= 1$ and this finishes the proof. 
\end{proof}
But on the other hand, by the definition, we have $\mathcal{F}(n,r)_{r-stab}=\emptyset$ for every $n$ which this implies that $\chi\left({\textrm{KG}}^r\left({\mathcal{F}(n,r)}_{r-stab}\right)\right)= 0$. Thus, the family $\mathcal{F}(n,r)$ when $n=kr+1$\footnote{Actually, a similar argument shows that the family $\mathcal{F}(n,r)$ provides a counterexample for Conjecture \ref{Conj: Florian} provided $n$ is not a multiply of $r$ and $n\geq r$.} for some $k\geq 1$ gives a counterexample to the mentioned conjecture.
\vspace{0.2cm}

At first, we thought that if we add the assumption that the $r$-stable part of the family $\mathcal{F}$ is not-empty in Conjecture~\ref{Conj: Florian}, then this version of the conjecture might be true. After trying to find some positive results in this direction for weeks, unfortunately, finally we came up with the following counterexample again. 

\begin{proposition}
Let $r\geq 2$, $n=r(2r-1)$ and set
$$\mathcal{F}=\mathcal{F}(n,r)\bigcup\{\{1+ir, 1+(i+1)r\} : i=0, \ldots 2r-3\}\bigcup\{\{(2r-2)r+1, 1\}\}.$$
We have
$$cd_r(\mathcal{F})\geq r\quad \&\quad \chi(\textrm{KG}^{r}(\mathcal{F}_{r-stab})))=1.$$
\end{proposition}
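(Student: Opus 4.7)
The plan is to compute $\mathcal{F}_{r-stab}$ and then prove the two assertions separately. A pair $\{i,j\}\subseteq[n]$ is $r$-stable precisely when $r\le|i-j|\le n-r$; by construction no pair in $\mathcal{F}(n,r)$ is $r$-stable, whereas the $2r-2$ added edges $\{1+ir,1+(i+1)r\}$ (each of difference $r$) and the closing edge $\{(2r-2)r+1,1\}$ (difference $n-r$) all are. Thus $\mathcal{F}_{r-stab}$ is exactly the set of $2r-1$ edges of an odd cycle on the vertex set $V_c=\{1+ir : 0\le i\le 2r-2\}$.

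For $\chi(\textrm{KG}^r(\mathcal{F}_{r-stab}))=1$ (assuming $r\ge 3$), I show that this cycle contains no $r$ pairwise intersecting edges. Two edges of a cycle meet only when they share a vertex, and in a cycle of length $2r-1\ge 5$ every vertex has degree $2$ and there are no triangles, so no three edges can be pairwise incident -- a fortiori no $r\ge 3$ can be. Hence $\textrm{KG}^r(\mathcal{F}_{r-stab})$ is hyperedge-free, and its chromatic number equals $1$.

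For $cd_r(\mathcal{F})\ge r$, suppose for contradiction that $S\subseteq[n]$ with $|S|<r$ admits a proper $r$-coloring $c:[n]\setminus S\to[r]$ of the induced graph. The graph $\mathcal{F}(n,r)$ is the $(r-1)$-th power of the cyclic graph $C_n$, so any $r$ consecutive positions form an $r$-clique. The central forcing step is: whenever the window $R_j=\{jr+1,jr+2,\ldots,(j+1)r+1\}$ (read cyclically) is disjoint from $S$, the two overlapping $r$-cliques $\{jr+1,\ldots,(j+1)r\}$ and $\{jr+2,\ldots,(j+1)r+1\}$ share $r-1$ vertices using $r-1$ of the colors, which forces $c(jr+1)=c((j+1)r+1)$.

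Index the added cycle edges as $e_j=\{jr+1,(j+1)r+1\}$ for $j\in\mathbb{Z}/(2r-1)$ and write $S=S_c\sqcup S_o$ with $S_c=S\cap V_c$. Note that each non-cycle vertex belongs to exactly one $R_j$, whereas each cycle vertex $1+jr$ belongs to exactly two, namely $R_{j-1}$ and $R_j$; crucially, removing this cycle vertex also deletes both incident edges $e_{j-1},e_j$ from the induced graph. For every cycle edge $e_j$ that survives, the coloring demands $R_j\cap S\ne\emptyset$, and the witness must come from $S_o$, since any cycle-vertex witness would have already killed $e_j$. The $|S_c|$ cycle-vertex deletions kill at most $2|S_c|$ cycle edges, so at least $(2r-1)-2|S_c|$ survive, giving $|S_o|\ge(2r-1)-2|S_c|$, whence
\[
|S|=|S_c|+|S_o|\ge(2r-1)-|S_c|\ge(2r-1)-|S|,
\]
and therefore $|S|\ge r$, the desired contradiction. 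The main obstacle is spotting the asymmetric role of cycle versus non-cycle deletions -- a cycle-vertex removal covers two ranges but both correspond to already-deleted cycle edges, so it never helps with the covering task; once one sees this, the proof reduces to a short double count.
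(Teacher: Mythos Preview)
Your computation of $\mathcal{F}_{r\text{-stab}}$ is correct, and your argument for $cd_r(\mathcal{F})\ge r$ is both correct and somewhat slicker than the paper's. The paper works only with the windows $A_i=\{1+ir,\ldots,1+(i+1)r\}$ for $i=0,\ldots,2r-3$, observes that a set $B$ of size $r-1$ hitting all of them is forced to be exactly $\{1+r,1+3r,\ldots,1+(2r-3)r\}$, and then exhibits the uncovered $(r+1)$-clique $\{(2r-2)r+1,\ldots,n,1\}$. Your double count (surviving cycle edges force non-cycle deletions in disjoint windows, while cycle-vertex deletions only kill already-dead constraints) reaches the same bound without the case analysis.

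However, your proof that $\chi(\textrm{KG}^r(\mathcal{F}_{r\text{-stab}}))=1$ aims at the wrong target. A hyperedge of $\textrm{KG}^r$ is an $r$-tuple of pairwise \emph{disjoint} members of $\mathcal{F}_{r\text{-stab}}$, so to show the Kneser hypergraph is edgeless you must rule out $r$ pairwise disjoint edges of the cycle $C_{2r-1}$. Instead you argue that no three edges of this cycle are pairwise \emph{intersecting} (``no three edges can be pairwise incident''); that statement is true but has no bearing on the Kneser hypergraph. The fix is immediate: a matching in $C_{2r-1}$ has size at most $\lfloor(2r-1)/2\rfloor=r-1$, so $r$ pairwise disjoint edges cannot exist. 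This is exactly the paper's pigeonhole observation that $r$ disjoint $2$-sets would need $2r$ points, while $\bigl|\bigcup\mathcal{F}_{r\text{-stab}}\bigr|=2r-1$.
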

\begin{proof}
First note that
$$\mathcal{F}_{r-stab}=\{\{1+ir, 1+(i+1)r\} : i=0, \ldots 2r-3\}\bigcup\{\{(2r-2)r+1, 1\}\},$$
which implies $\chi\left(\textrm{KG}^{r}\left(\mathcal{F}_{r-stab}\right)\right)=1$ as
$\mathcal{F}_{r-stab}\neq\emptyset$ and
there are no $r$-pairwise disjoint sets in $\mathcal{F}_{r-stab}$. Indeed, the later property is simply because the size of the set $\bigcup \mathcal{F}_{r-stab}=\{1, 1+r, \ldots, 1+r(2r-2)\}$ is $2r-1$, and hence any collection of $2$-subsets of $\mathcal{F}_{r-stab}$ of size $r$ contains at least two sets with a non-empty intersection.  

For the other part, suppose the contrary, that is $cd_r\left(\mathcal{F}\right)\leq r-1$. So, we can remove a set $B$ of size $r-1$ of elements of the ground set $[n]$ such that the induced family on the rest of elements is $r$-colorable. First note that, $B$ must contain at least one element from each of the following sets
$$A_i=\{1+ir,\ldots , 1+\left(i+1\right)r\}\quad \&\quad \text{for}\quad i=0, \ldots, 2r-3.$$
This is because, if we do not delete an element from an $A_i$ for some $i$, then each elements of this set must receive a different colors as any $2$-subset of $A_i$ is in $\mathcal{F}$. But, then this is impossible as the size of $A_i$ is $r+1$ and we have just $r$ colors. Next, note that for $i < j$
$$A_i\cap A_j\neq\emptyset\quad\text{iff}\quad j= i+1,$$
and moreover $A_i\cap A_{i+1}=\{1+\left(i+1\right)r\}$ for $i=0, \ldots, 2r-3$.
These properties beside the fact that we have $2r-2$ sets $A_i$ and the size of $|B|=r-1$ uniquely determine $B$, i.e.,
$$B=\{1+r, 1+3r,\ldots, 1+\left(2r-3\right)r\}.$$

So, in particular, $B$ does not have any elements of the set  
$$C=\{1+\left(2r-2\right)r, 2+\left(2r-2\right)r, \ldots, \left(2r-1\right)r, 1 \}.$$ 
But, then again we need $r+1$ colors to color this part as any $2$-subset of $C$ appears in $\mathcal{F}$ and $|C|=r+1$. This contradiction finishes the proof. 
\end{proof}

We believe the following weaker version of Florian's conjecture might be true.
\begin{conjecture*}\label{Conj}
 Let $r\geq 2$ and let $\mathcal{F}$ be a set system. Then
 $$\chi\left(\textrm{KG}^r\left(\mathcal{F}_{\widetilde{r-\text{stab}}}\right)\right)\geq\left\lceil\frac{cd_r\left(\mathcal{F}\right))}{r-1}\right\rceil.$$
\end{conjecture*}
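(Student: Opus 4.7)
The plan is to mimic the classical Dol'nikov--Kriz topological proof of the analogous bound $\chi(\textrm{KG}^r(\mathcal{F})) \geq \lceil cd_r(\mathcal{F})/(r-1)\rceil$ for the \emph{unrestricted} Kneser hypergraph, and to show that the topological argument survives the restriction to almost $r$-stable sets. The conceptual reason for optimism is that almost $r$-stability is a linear, non-cyclic sparseness condition, which fits naturally with the ordered constructions appearing in the standard test-map argument, whereas full $r$-stability enforces a cyclic symmetry that collides with the defect complex---precisely the conflict exploited by the counterexamples in this note.

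Concretely, given a hypothetical proper coloring $c\colon \mathcal{F}_{\widetilde{r-\text{stab}}} \to [m]$, I would first set up the Dol'nikov--Kriz source complex: the subcomplex $\Sigma(\mathcal{F})$ of the $r$-fold $2$-wise deleted join of $[n]$ consisting of pairwise-disjoint $r$-tuples $(V_1,\ldots,V_r)$ with $|V_1\cup\cdots\cup V_r| > n - cd_r(\mathcal{F})$. This complex carries a free $\mathbb{Z}_r$-action and has $\mathbb{Z}_r$-index at least $cd_r(\mathcal{F}) - 1$; moreover, by the definition of $cd_r$, for every such tuple at least one $V_i$ contains an element of $\mathcal{F}$. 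The second step would be to use $c$ to construct a $\mathbb{Z}_r$-equivariant test map from $\Sigma(\mathcal{F})$ into the standard real $\mathbb{Z}_r$-representation of dimension $m(r-1)$. The third step is a standard application of the equivariant obstruction theorem (Volovikov's lemma or Dold's theorem): the nonexistence of such a map unless $m(r-1) \geq cd_r(\mathcal{F})$ yields the claim.

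The main obstacle lies in the second step. The classical construction reads off colors of $\mathcal{F}$-elements contained in the $V_i$, but here only the restriction of $c$ to $\mathcal{F}_{\widetilde{r-\text{stab}}}$ is available, and a priori a $V_i$ could contain an $\mathcal{F}$-witness without containing any almost $r$-stable witness. The proposed fix is a greedy left-to-right thinning of each $V_i$: scan $[n]$ in increasing order and keep each element of $V_i$ whose distance from the previously kept element is at least $r$. The resulting subset is automatically almost $r$-stable, and the operation is canonical and equivariant under the $\mathbb{Z}_r$-action permuting color classes. The real difficulty is then the combinatorial lemma guaranteeing that, under the defect inequality $|V_1\cup\cdots\cup V_r| > n - cd_r(\mathcal{F})$, some thinned $V_i$ still contains a member of $\mathcal{F}_{\widetilde{r-\text{stab}}}$. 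The linear (non-cyclic) character of the scan is exactly what distinguishes this situation from the disproved $r$-stable version, so any proof must essentially exploit the order on $[n]$; this is both the source of the conjecture's plausibility and the hardest part to nail down.
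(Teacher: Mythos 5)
This statement is not proved in the paper at all: it is stated explicitly as a conjecture (``We believe the following weaker version \dots might be true''), so there is no proof of record to compare against, and your proposal has to stand on its own. It does not, and you have in fact already located the gap yourself. Steps 1 and 3 are the standard K\v{r}\'{i}\v{z}-type argument and are fine in outline (modulo the usual reduction to prime $r$), but step 2 is where the entire content of the conjecture lives, and the proposed fix does not work. The greedy left-to-right thinning of $V_i$ produces an almost $r$-stable \emph{subset of $[n]$}, but membership in $\mathcal{F}$ is neither hereditary nor preserved by thinning: the defect inequality $|V_1\cup\cdots\cup V_r| > n - cd_r(\mathcal{F})$ only guarantees that some $V_i$ contains a member of $\mathcal{F}$, and that member may have two elements at distance less than $r$, in which case thinning it (or $V_i$) yields a set that need not belong to $\mathcal{F}$ at all. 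There is no mechanism in the proposal to upgrade ``$V_i$ contains a member of $\mathcal{F}$'' to ``$V_i$ contains a member of $\mathcal{F}_{\widetilde{r-\text{stab}}}$'', and this upgrade is false tuple-by-tuple for general $\mathcal{F}$; the paper's own first counterexample, $\mathcal{F}(n,r)=\binom{[n]}{2}\setminus\binom{[n]}{2}_{r\text{-stab}}$, is built precisely from members that are as non-sparse as possible, and only the wrap-around pairs $\{i,j\}$ with $|i-j|>n-r$ happen to survive as almost $r$-stable witnesses. Any genuine proof would need either a different source complex adapted to almost-stable configurations (as in the known arguments for $\binom{[n]}{k}$, which exploit the structure of $k$-subsets heavily) or a combinatorial lemma of exactly the kind you admit you cannot supply.

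One smaller point: the heuristic that ``linear versus cyclic sparseness'' explains why the almost-stable version should survive while the stable version fails is a reasonable intuition and is consistent with the paper's closing remark (which shows the almost-stable conjecture would pin $\lceil cd_r(\mathcal{F})/(r-1)\rceil - \chi(\mathrm{KG}^r(\mathcal{F}_{r\text{-stab}}))$ down to at most $1$), but it is not a proof ingredient. As written, your text is a research plan with an honestly flagged missing lemma, not a proof of the statement.
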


\begin{remark}
\normalfont
If Conjecture \ref{Conj} is true, then 
$$\left\lceil\frac{cd_r\left(\mathcal{F}\right)}{r-1}\right\rceil-\chi\left(\textrm{KG}^r\left(\mathcal{F}_{r-stab}\right)\right)\leq 1.$$
To confirm this claim it is enough to show that
$$\chi\left(\textrm{KG}^r\left(\mathcal{F}_{\widetilde{r-stab}}\right)\right)-\chi\left(\textrm{KG}^r\left(\mathcal{F}_{{r-stab}}\right)\right)\leq 1,$$
where $\mathcal{F}$ is an arbitrary family on the ground set $[n]$.
Let $c$ be an proper coloring of $\textrm{KG}^r\left(\mathcal{F}_{{r-stab}}\right)$. We extend this coloring to a proper coloring $c^{\prime}$ for $\textrm{KG}^r\left(\mathcal{F}_{\widetilde{r-stab}}\right)$ with one more new color, namely $\ast$. Define
\[ c^{\prime}\left(F\right)=\begin{cases} 
      c\left(F\right)\quad\, \text{if}\quad F\in \mathcal{F}_{{r-stab}}\\
      \ast\quad\quad\quad \text{if}\quad F\in \mathcal{F}_{\widetilde{r-stab}}\setminus \mathcal{F}_{{r-stab}}
   \end{cases}
\]
It is pretty easy to check that this map gives a proper coloring of $\textrm{KG}^r\left(\mathcal{F}_{\widetilde{r-stab}}\right)$. Indeed, note that if $$F\in \mathcal{F}_{\widetilde{r-stab}}\setminus \mathcal{F}_{{r-stab}}$$
then $F\cap\{1, \ldots, r-1\}\neq\emptyset$. Thus, there are two sets with a non empty intersection among any collection of $r$ such sets, which this verifies the claim.
\end{remark}


\medskip
 
\bibliographystyle{alpha}
\bibliography{main}

\end{document}